\documentclass[a4paper,11pt,reqno]{amsart}

\usepackage{amsmath}%
\usepackage{amsfonts}%
\usepackage{amssymb}%
\usepackage{graphicx}
\usepackage{float}
\usepackage{amssymb}
\usepackage{color}
\setlength{\parindent}{0 em} \setlength{\textwidth}{16cm} \setlength{\textheight}{24cm}
\setlength{\hoffset}{0 em}
\setlength{\oddsidemargin}{0 em}
\setlength{\voffset}{-5 em}
\setlength{\evensidemargin}{0 em}

\usepackage{multirow}

\usepackage{setspace}
\usepackage{cite}

%------------------------------------------------------------
% Theorem like environments
%
\theoremstyle{plain}
\newtheorem{acknowledgement}{Acknowledgement}

\newtheorem{theorem}{Theorem}
\newtheorem{definition}[theorem]{Definition}
\newtheorem{example}[theorem]{Example}

\newtheorem{proposition}[theorem]{Proposition}
\newtheorem{remark}[theorem]{Remark}
\newtheorem{corollary}[theorem]{Corollary}

\allowdisplaybreaks

\newcommand\NN{\mathbb{N}}

\newcommand\RR{\mathbb{R}}

\newcommand\ex{\mathbb{E}}

\newcommand\WA{\mathcal{WA}}

\newcommand\eins{\mathrm{1}}
\newcommand\lin{\mathrm{lin}}

\numberwithin{equation}{section}

%Opt approx of Skorohod SDEs (linear case)

\begin{document}

 \title[Optimal pointwise approximation of anticipating SDEs]{Optimal pointwise approximation of anticipating SDEs}
 
\author{Peter Parczewski}
\address{Peter Parczewski: Institute of Mathematics B6, University of Mannheim, D-68131 Mannheim, Germany}
\email{parczewski@math.uni-mannheim.de}

\date{\today}

\subjclass[2010] {Primary 60H07; Secondary 65C30, 60H10}

\keywords{anticipating stochastic differential equations, exact rate of convergence, Malliavin calculus, Skorohod integral, Wiener chaos}

\begin{abstract} 
We derive the
optimal rate of convergence for the mean squared error at the terminal point for anticipating linear stochastic differential equations, where the integral is interpreted in Skorohod sense.
Although alternative proof techniques are needed, our results can be seen as generalizations of the corresponding results for It\=o SDEs. As a key tool we extend optimal approximation results for vectors of correlated Wiener integrals to general random vectors, which contain the solutions of our Skorohod SDEs.
\end{abstract}
\maketitle

\section{Introduction}
We suppose a Brownian motion $(W_{t})_{t \in [0,1]}$ on the probability space $(\Omega, \mathcal{F}, P)$, where the $\sigma$-field $\mathcal{F}$ is generated by the Brownian motion and completed by null sets. Under the assumption that the Brownian motion is evaluated at an equidistant time grid, i.e. we have the information $W_{1/n}, W_{2/n},\ldots, W_1$, the investigation of optimal approximation with respect to the mean squared error (MSE) for It\=o stochastic differential equations (SDEs) is well-studied. For SDEs interpreted in the classical It\=o sense there is a rich literature on numerical results, approximation algorithms and error analysis. We mention the monographs \cite{Kloeden_Platen, GrahamTalay}. %Many upper bounds for the MSE of approximation schemes can be found in \cite{Kloeden_Platen}. 
We also refer to the survey \cite{MGR} and the comprehensive study of the pointwise optimal approximation of It\=o SDEs given in \cite{Mueller_Gronbach}. 

Much less is known about numerical schemes for anticipating stochastic differential equations. Some upper bounds of the MSE are known from the investigation of Euler schemes for a class of Skorohod SDEs, see e.g.\cite{TorresTudor} and \cite{Shevchenko}. An Euler scheme for an anticipating Stratonovich SDE is analysed in \cite{AnhKHiga}. A Wong-Zakai result for anticipating Stratonovich SDEs is given in \cite{CoutinFrizVictoir} by rough path theory. None of these studies deal with optimal approximation or lower bounds of the MSE. Skorohod SDEs arise in several applications, e.g. the computation of derivative-free option price sensitivities \cite{Fournie, Chen_Glasserman}. %or the payoff-smoothing in mathematical finance \cite{AN}

To the best of our knowledge, this is the first study of optimal approximation for anticipating SDEs, where the integral is interpreted in Skorohod sense. 

There are very few existence results on Skorohod SDEs. However, all the difficulties arise already at linear Skorohod SDEs with a nonadapted initial value: Let $a, \sigma, f \in C^1([0,1];\RR)$ and consider the Wiener integral $I(f) = \int_{0}^{1} f(s)dW_s$ and some sufficiently smooth function $F:\RR \rightarrow \RR$ (cf. Theorem \ref{thm_OptSDELin} below). Then the unique solution $(X_t)_{t \in [0,1]}$ of the Skorohod SDE with the nonadapted initial value
\begin{align}\label{eq_SDE}
dX_t = a(t) X_t dt + \sigma(t) X_t dW_t, \quad X_0 = F(I(f)), \ t \in [0,1], 
\end{align}
exists in $L^2(\Omega \times [0,1])$ (see e.g. \cite{Buckdahn_Nualart}). 
The solution of \eqref{eq_SDE} has a simple representation in terms of a convolution operator, the Wick product, a basic tool in stochastic analysis, see e.g. \cite{Buckdahn_Nualart, Holden_Buch} and Section \ref{section_SkorohodSDE}, as
\begin{equation}\label{eq_LinSkorohodSDESol}
X_t = X_0 \diamond \exp\left(\int_{0}^{t} \sigma(s) dW_s + \int_{0}^{t} (a(s) - \sigma^2(s)/2)ds\right). 
\end{equation}
The main difficulty is the handling of the convolution operator. 
We denote the norm and inner product on $L^2 := L^2([0,1];\RR)$ by $\|\cdot \|$ and $\langle \cdot, \cdot\rangle$. The stochastic calculus on $L^2(\Omega) := L^2(\Omega,\mathcal{F},P)$ is based on the Gaussian Hilbert space $\{I(f) : f \in L^2\} \subset L^2(\Omega)$. 
The \emph{Wick exponential}, i.e. the stochastic exponential of a Wiener integral $I(f)$, is defined by
\begin{equation}\label{eq_WickExponential}
\exp^{\diamond}(I(f)) := \exp\left(I(f) - \|f\|^2/2\right). 
\end{equation}

Given a random variable $X \in L^2(\Omega)$, we are interested in the approximation 
$$\widehat{X}^n \in L^2(\Omega,\sigma(W_{1/n}, \ldots, W_1),P),
$$
that minimizes the mean squared error (MSE)
$
\ex[(X-\widehat{X}^n)^2]
$.
This is clearly given by
\begin{equation}\label{eq_OptimalAppproxDef}
\widehat{X}^n := \ex[X|W_{1/n}, W_{2/n}, \ldots, W_1].  
\end{equation}

In the following we mean by 
$$
f' \in BV
$$ 
that $f \in L^2$ is differentiable and $f': [0,1] \rightarrow \RR$ is of bounded variation. 
As a generalization of Wick exponentials following \cite{Buckdahn_Nualart}, we define the class of \emph{Wick-analytic functionals $\WA$} as
\begin{equation*}
F(I(f_1), \ldots, I(f_K)) = \left(\sum\limits_{k=0}^{\infty}a_{1,k} I(f_1)^{\diamond k}\right) \diamond \cdots \diamond \left(\sum\limits_{k=0}^{\infty} a_{K,k} I(f_K)^{\diamond k}\right),
\end{equation*}
where $K \in \NN$, $f_1, \ldots, f_K \in L^2$ and $\sup\{\sqrt[k]{k! |a_{i,k}|} : i\leq K, k\geq 1\}  < \infty$. The smoothness of Wick-analytic functionals $F(\cdot)$ is explained in Section \ref{section_SkorohodSDE}.

Our main result on optimal pointwise approximation is:

\begin{theorem}\label{thm_OptSDELin}
Suppose $f', \sigma' \in BV$, $a: \RR \rightarrow \RR$ is integrable, $X_0 = F(I(f)) \in \WA$. Then for the solution of the Skorohod SDE \eqref{eq_SDE} we have
$$
\lim_{n \rightarrow \infty} n^2 \, \ex[(X_1-\widehat{X_1}^n)^2]= \frac{e^{2\int_{0}^{1} a(s) ds}}{12} \int_{0}^{1} \ex\left[\left(\left(f'(s) F'(I(f)) + \sigma'(s) F(I(f))\right)\diamond e^{\diamond I(\sigma)}\right)^2\right] ds.
$$
\end{theorem}

A direct application of Theorem \ref{thm_OptSDELin} on a Wick exponential type initial value gives:

\begin{example}\label{exe_LinSDEWickExp}
Suppose $f', \sigma' \in BV$, $a$ is integrable and the nonadapted initial value $X_0 = e^{\diamond}(I(f))$. The solution of the linear Skorohod SDE is then given by
$$ 
X_t = e^{\diamond I(f)} \diamond e^{\diamond \int_{0}^{t} \sigma(s) dW_s} e^{\int_{0}^{t} a(s) ds} = e^{\diamond I(f + \eins_{[0,t)}\sigma)} e^{\int_{0}^{t} a(s) ds}.
$$
and the terminal value $X_1$ satisfies the asymptotic optimal approximation
$$
\lim_{n \rightarrow \infty} n^2 \, \ex[(X_1-\widehat{X_1}^n)^2] =
 e^{2\int_{0}^{1} a(s) ds+\|f+\sigma\|^2} \|f' + \sigma'\|^2/12.
$$
However, here $X_1$ coincides with the solution of a linear It\=o SDE and the constant above is contained in the optimal approximation results in \cite{Mueller_Gronbach}. Considering some $X_0 = e^{\diamond I(f)}$ with $f \in C^1([0,2];\RR), f\geq 1$ and $\mathrm{supp} (f) = [0,2]$ (in that case a larger $L^2(\Omega)$ as well), this is not covered by \cite{Mueller_Gronbach} anymore, as $X_{1}$ is now nonadapted. The extension of Theorem \ref{thm_OptSDELin} to such extended time horizons and initial values is straightforward.
\end{example}

 %Exemplary this class covers initial values of the type\begin{equation}\label{eq_X0LinMulit}X_0 = \cos\left(\int_{0}^{1}e^s\, dW_s\right) + \left(\int_{0}^{1}\sin(s)\, W_s \, ds\right)\exp\left(\int_{0}^{1}W_s\, ds\right). \end{equation}

In contrast to It\=o SDEs, concerning Skorohod integrals and anticipating SDEs, we have no martingale or Markov tools, in particular we have to handle the lack of It\=o isometry and well-known bounds as martingale inequalities (e.g. the BDG inequality). Therefore we make use of subtle computations of the Wiener chaos expansion of all objects involved. As already observed on the optimal approximation of Skorohod integrals \cite{NP}, this necessary alternative approach leads surprisingly to natural generalizations of the corresponding results for It\=o SDEs. Notice again that Theorem \ref{thm_OptSDELin} extends the corresponding results for linear It\=o SDEs in \cite[Theorem 1]{Mueller_Gronbach}.

The main tools for our considerations are optimal approximation results for functionals of Wiener integrals. This leads to nice compatibility relations for optimal approximations of all random elements (Section \ref{section_OptWienerChaos}). Due to these general results, Theorem \ref{thm_OptSDELin} extends easily to more general nonadapted initial values (Theorem \ref{thm_OptSDELinMulti}).

The generalization to an asymptotically optimal approximation scheme for \eqref{eq_SDE} is part of subsequent work.

\section{Skorohod SDEs and Wick-analytic functionals}\label{section_SkorohodSDE}

For a possibly nonadapted process $(u_s)_{s \in [0,1]}$, the Skorohod integral $\int_{0}^{1} u_s dW_s$ can be defined as a natural extension of the It\=o integral, see e.g. \cite{ DiNunno, Holden_Buch, Pardoux}. 
%We consider anticipating stochastic differential equations where the integral is interpreted in Skorohod sense. 
%There are many introductions to the Skorohod integral and anticipating SDEs. 
%An essential tool in our approach is the representation of processes via the Wiener chaos decomposition and the characterization via the S-transform. %Aiming the optimal approximation, we collect some basic properties and representations and define a class of random variables for the initial value of the Skorohod SDEs.
%We present the classical existence results for Skorohod stochastic differentials and give some useful reformulations of the solutions in terms of Wick products. 

%We suppose a Brownian motion $(W_{t})_{t \in [0,1]}$ on the probability space $(\Omega, \mathcal{F}, P)$, where the $\sigma$-field $\mathcal{F}$ is generated by the Brownian motion and completed by null sets. 
We make use of a definition via Wick exponentials \eqref{eq_WickExponential}, which have many useful properties. In particular, 
$
\{\exp^{\diamond}(I(f)) : f \in L^2\}$
is a total set in $L^p(\Omega,\mathcal{F},P)$, $p>0$ (see e.g. \cite[Cor. 3.40]{Janson}) and allows the characterization of random variables, the S-transform definition of the Skorohod integral (cf. e.g. \cite[Sec. 16.4]{Janson}):

\begin{definition}\label{def_SkorohodIntegral}
Suppose $u=(u_s)_{s \in [0,1]}$ is a (possibly nonadapted) square integrable process on $(\Omega, \mathcal{F}, P)$ and $Y \in L^2(\Omega, \mathcal{F}, P)$ such that
\[
\forall g \in L^2: \ \ex[Y e^{\diamond I(g)}] = \int_{0}^{1} \ex[u_s e^{\diamond I(g)}] g(s) ds, 
\]
then $\int_{0}^{1}u_s dW_{s} = Y$ defines the Skorohod integral of $u$ with respect to $(W_{t})_{t \in [0,1]}$.
\end{definition}

For more information on the Skorohod integral we refer to \cite{Janson, Kuo, Nualart}.
%Skorohod integrals arise in several applications, e.g. computations of derivative-free option price sensitivities \cite{Fournie} or market models with insider trading \cite[Chapter 6]{Nualart}. 
%\begin{remark}In contrast to classical approaches to the Skorohod integral (via multiple Wiener integrals or as the adjoint of the Malliavin derivative), the above definition by S-transform extends immediately to integrands in $L^p(\Omega \times [0,1])$, $p>1$. We refer to \cite{AyedKuo,P18} on further recent approaches to the Skorohod integral beyond square-integrable integrands and to \cite{BP3} for a characterization of the Skorohod integral via discrete counterparts.\end{remark}
The definition above is closely related to a convolution imitating the product of uncorrelated random variables as $\ex[X \diamond Y]=\ex[X]\ex[Y]$, which is implicitly contained in the Skorohod integral and a fundamental tool in stochastic analysis. Due to the injectivity of $X \mapsto \ex[X e^{\diamond I(g)}]$  the \emph{Wick product} can be introduced via
\begin{equation*}
\forall g \in L^2 \ : \ \ex[(X \diamond Y) e^{\diamond I(g)}] = \ex[X e^{\diamond I(g)}] \ex[ Y e^{\diamond I(g)}] 
\end{equation*}
on a dense subset in $L^{2}(\Omega) \times L^{2}(\Omega)$ (see e.g. \cite[Chap. 16]{Janson} for more details). For example, it is  
$
e^{\diamond I(g)} \diamond e^{\diamond I(h)} = e^{\diamond I(g+h)}
$
for all $f,g \in L^2$.
In particular, for a Wiener integral $I(f)$, the Hermite polynomials play the role of monomials in standard calculus as $
(I(f))^{\diamond k} = h^k_{\|f\|^2}(I(f))$ and the notation Wick exponential is well justified by $\exp^{\diamond}(I(f)) = \sum_{k=0}^{\infty} \frac{1}{k!} I(f)^{\diamond k}$. For more details on Wick exponentials we refer to \cite{Holden_Buch, Janson, Kuo}. We note that the derivative rule for Hermite polynomials as polynomials $h^k_{\|f\|^2}(x) = p(x,\|f\|^2)$ gives for the ordinary derivative $\dfrac{\partial}{\partial x} h^k_{\|f\|^2}(I(f)) = k h^{k-1}_{\|f\|^2}(I(f))$.
%The close connection of the Skorohod integral and Wick calculus is not only illustrated by their both introduction via the S-transform. Exemplary we notice the Fubini argument on the compatibility of Skorohod integrals, pathwise integrals and Wick products, which will be useful. For a proof we refer to \cite[Chapter 16]{Janson} or \cite[Proposition 7]{NP}:
%\begin{proposition}\label{prop_SkorohodAndWick}Suppose $X \in L^2(\Omega)$ and a Skorohod integrable process  $u\in L^2(\Omega \times [0,1])$. Then, if both sides exist in $L^2(\Omega)$:\[\int_{0}^{1} X \diamond u_s dW_s = X \diamond \int_{0}^{1} u_s dW_s, \qquad  \int_{0}^{1} X \diamond u_s ds = X \diamond \int_{0}^{1} u_s ds.\]\end{proposition}

An example in $\WA$, for the Wiener integral 
$\int_{0}^{1}W_s \, ds = \int_{0}^{1}(1-s)dW_s$, is %(cf. \cite[p. 107]{Holden_Buch}):
$$
\sin\left(\int_{0}^{1}W_s \, ds\right) = \sin^{\diamond}\left(\int_{0}^{1}W_s \, ds\right)e^{\int_{0}^{1}(1-s)^2\,ds/2} =  \sum_{k=1}^{\infty} \frac{(-1)^{k-1}e^{1/6}}{(2k-1)!}\left(\int_{0}^{1}W_s \, ds\right)^{\diamond (2k-1)},
$$
which cannot be simulated exactly.

We denote the linear span by $\lin(\WA)$. Notice that $\lin (\WA) \subset L^2(\Omega)$ (see e.g. \cite[Proposition 9]{NP}).

\begin{remark}\label{rem_derivativeWA}
Let a Wick-analytic functional $F(I(f)) = \sum_{k=0}^{\infty} a_k I(f)^{\diamond k}$. Thanks to the derivative rule for Hermite polynomials, it is $F'(I(f)) = \sum_{k \geq 0} (k+1)a_{k+1} I(f)^{\diamond k}$. Due to $(k+1)/k\leq 2$, we have
$
\sup \{\sqrt[k]{(k+1)! |a_{k+1}|} : k\geq 1\} \leq \left(\sup \{\sqrt[k]{k! |a_{k}|} : k\geq 1\}\right)^2<\infty.
$ 
Therefore it clearly is $F'(I(f)) \in \WA$.
\end{remark}

An iteration of the conclusion in Remark \ref{rem_derivativeWA} (cf. \cite[Proposition 10]{NP}) yields:

\begin{proposition}\label{prop_DerivativesWA}
All derivatives of elements in $\lin (\WA)$ as in Remark \ref{rem_derivativeWA} are in $\lin (\WA)$ as well.
\end{proposition}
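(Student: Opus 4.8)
The plan is to deduce the statement from the single-variable case in Remark~\ref{rem_derivativeWA} by an iteration, using only the bilinearity of the Wick product and the product rule of elementary calculus.

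By a \emph{derivative} of an element $G = F(I(f_1),\dots,I(f_K)) \in \WA$ I understand, consistently with Remark~\ref{rem_derivativeWA}, the random variable $(\partial^{\alpha}F)(I(f_1),\dots,I(f_K))$ for a multi-index $\alpha \in \NN_0^{K}$. Since differentiation and the formation of finite linear combinations are linear, and $\lin(\WA)$ is closed under the latter by definition, it suffices to prove $\partial^{\alpha}G \in \WA$ for every such $G$ and $\alpha$. Write $g_i(x) = \sum_{k\ge 0} a_{i,k}x^{k}$, so that $F(x_1,\dots,x_K) = g_1(x_1)\cdots g_K(x_K)$ and $G = g_1(I(f_1)) \diamond \cdots \diamond g_K(I(f_K))$. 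The assumption $\sup_{i\le K,\,k\ge 1}\sqrt[k]{k!\,|a_{i,k}|} < \infty$ makes each $g_i$ entire, so the Leibniz rule gives $\partial^{\alpha}F = g_1^{(\alpha_1)}(x_1)\cdots g_K^{(\alpha_K)}(x_K)$, each $g_i^{(\alpha_i)}$ arising by termwise differentiation of a convergent power series. Because $\diamond$ is bilinear and agrees with the ordinary product on the monomials $I(f_i)^{\diamond k}$ (this is exactly how the univariate differentiation is read off in Remark~\ref{rem_derivativeWA}), this identity transfers to
\begin{equation*}
\partial^{\alpha}G = g_1^{(\alpha_1)}(I(f_1)) \diamond \cdots \diamond g_K^{(\alpha_K)}(I(f_K)).
\end{equation*}

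Next I would check that each univariate factor remains a Wick-analytic functional. Fix $i$ and set $m := \alpha_i$ and $C_i := \sup_{k\ge 1}\sqrt[k]{k!\,|a_{i,k}|}$. Iterating Remark~\ref{rem_derivativeWA}, the coefficient $b_k$ of $x^{k}$ in $g_i^{(m)}$ equals $\frac{(k+m)!}{k!}\,a_{i,k+m}$, hence $k!\,|b_k| = (k+m)!\,|a_{i,k+m}| \le C_i^{\,k+m}$ and so $\sup_{k\ge 1}\sqrt[k]{k!\,|b_k|} \le \max(1,C_i)^{\,m+1} < \infty$; the degenerate case $C_i = 0$ (constant $g_i$) is trivial, as then $g_i^{(m)} = 0$ for $m \ge 1$. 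Thus $g_i^{(\alpha_i)}(I(f_i)) \in \WA$ for each $i$.

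Consequently $\partial^{\alpha}G$ is a Wick product of $K$ univariate Wick-analytic functionals, each in $\WA$, and therefore lies in $\WA$ by the very form of Definition~\ref{def_WA}; passing to finite linear combinations yields the claim for all of $\lin(\WA)$. I expect the only step requiring any care — and it is light — to be the bookkeeping in the third paragraph confirming that the quantity $\sup_{k\ge 1}\sqrt[k]{k!\,|a_k|}$ stays finite under repeated differentiation (it grows by a factor at most $\max(1,C_i)$ per derivative); the remainder is just the product rule together with the bilinearity and the monomial-compatibility of $\diamond$.
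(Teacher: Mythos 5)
Your proposal is correct and follows essentially the same route as the paper, which disposes of this proposition in one line as ``an iteration of the conclusion in Remark~\ref{rem_derivativeWA}''; you have simply made explicit the coefficient bookkeeping ($k!\,|b_k| = (k+m)!\,|a_{i,k+m}| \le C_i^{k+m}$, hence $\sup_{k\ge 1}\sqrt[k]{k!\,|b_k|} \le \max(1,C_i)^{m+1}$) and the factorwise differentiation of the Wick product that the paper leaves implicit. The only loose phrasing is the claim that $\diamond$ ``agrees with the ordinary product on the monomials'' --- it does not (e.g.\ $I(f)^{\diamond 2} = I(f)^2 - \|f\|^2$); what you actually use, and what is true, is that the derivative rule $\frac{\partial}{\partial x}I(f)^{\diamond k} = k\,I(f)^{\diamond (k-1)}$ mirrors that of ordinary monomials.
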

One can identify $\lin (\WA)$ as a class of smooth random variables in Malliavin calculus (cf. \cite[p. 25]{Nualart}). However, for optimal approximation the Wick-analytic representation is more appropriate, see Section \ref{section_OptWienerChaos}.

%\begin{remark}(i) \ Thanks to the S-transform we can prove that the solution is given by \eqref{eq_LinSkorohodSDESol}: For every $f \in L^2$, linearity gives the linear integral equation\begin{align*}(S X_t)(f) &=(S X_0)(f) + \int_{0}^{t}a(s) (S X_s)(f) ds + \int_{0}^{t}\sigma(s) (S X_s)(f) f(s)ds\end{align*}and therefore$$(S X_t)(f) = (S X_0)(f) e^{\int_{0}^{t} a(s) ds} e^{\int_{0}^{t} \sigma(s) f(s) ds}.$$Thanks to \eqref{eq_SOfWickExp}, \eqref{eq_WickByS} and Definition \ref{def_SkorohodIntegral}, we conclude the unique asserted solution \eqref{eq_LinSkorohodSDESol}. For further information on linear Skorohod SDEs we refer to \cite{Buckdahn_Nualart}.(ii) \ Due to the simple right hand side in \eqref{eq_LinSkorohodSDESol}, the convolution can be reformulated in terms of a Girsanov transform, see e.g. \cite{Buckdahn, Buckdahn_Nualart}. Concerning approximation results below, the representation in terms of Wick products as in Definition \ref{def_WA} is more helpful.\end{remark}

\section{Approximation and Wiener chaos}\label{section_OptWienerChaos}

In this section we present general results on optimal approximation and on simple implementable approximations for the class $\lin (\WA)$. However, besides these essential tools for our main result (Theorem \ref{thm_OptSDELin}), these optimality results and optimality constants are interesting for its own. At the end we prove the main result Theorem \ref{thm_OptSDELin} and give a further generalization.

The Wiener chaos expansion in terms of Wick analytic functionals has the advantage that the optimal approximation carries over to functionals in terms of Wick products. In fact, we have (see \cite[Corollary 9.4]{Janson} or \cite[Lemma 6.20]{DiNunno}):

\begin{proposition}\label{prop_WickConditionalExpectation}
For $X,Y, X\diamond Y \in L^2(\Omega)$ and the sub-$\sigma$-field $\mathcal{G} \subseteq \mathcal{F}$:
$$
\ex[X\diamond Y|\mathcal{G}] = \ex[X|\mathcal{G}]\diamond \ex[Y|\mathcal{G}].
$$ 
\end{proposition}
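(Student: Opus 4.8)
The plan is to verify the claimed identity on a total class of random variables and then pass to the limit, exploiting that both sides of the asserted formula are continuous in the relevant topology. The natural test class is the family of Wick exponentials $\{\exp^{\diamond}(I(g)) : g \in L^2\}$, which is total in every $L^p(\Omega,\mathcal{F},P)$, $p>0$, and whose $S$-transform is the explicit exponential $\langle g, \cdot\rangle$. So first I would record the key special case: for the sub-$\sigma$-field $\mathcal{G} = \sigma(W_{t_1}, \ldots, W_{t_m})$ (or, more invariantly, $\mathcal{G}$ generated by a closed subspace $V \subseteq L^2$ via the associated Wiener integrals), the conditional expectation $\ex[\exp^{\diamond}(I(g))\mid \mathcal{G}]$ equals $\exp^{\diamond}(I(P_V g))$, where $P_V$ is the orthogonal projection onto $V$. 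This is the standard formula for conditional expectations of Gaussian exponentials, and it immediately gives the Wick multiplicativity on exponentials, since
\begin{align*}
\ex[\exp^{\diamond}(I(g)) \diamond \exp^{\diamond}(I(h)) \mid \mathcal{G}]
&= \ex[\exp^{\diamond}(I(g+h)) \mid \mathcal{G}] = \exp^{\diamond}(I(P_V(g+h))) \\
&= \exp^{\diamond}(I(P_V g)) \diamond \exp^{\diamond}(I(P_V h))
= \ex[\exp^{\diamond}(I(g))\mid\mathcal{G}] \diamond \ex[\exp^{\diamond}(I(h))\mid\mathcal{G}].
\end{align*}

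Next I would promote this from exponentials to arbitrary $X, Y$ with $X, Y, X\diamond Y \in L^2(\Omega)$. One clean route is via the $S$-transform. Since $S$ is injective on $L^2(\Omega)$, it suffices to check that both sides have the same $S$-transform at every $f \in L^2$; but to relate the $S$-transform of a conditional expectation $\ex[Z\mid\mathcal{G}]$ to that of $Z$, one uses the tower property together with $\ex[\ex[Z\mid\mathcal{G}]\exp^{\diamond}(I(f))] = \ex[Z\,\ex[\exp^{\diamond}(I(f))\mid\mathcal{G}]] = \ex[Z\exp^{\diamond}(I(P_V f))]$, i.e. $(S\,\ex[Z\mid\mathcal{G}])(f) = (SZ)(P_V f)$. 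Granting this, the defining relation $S(X\diamond Y)(g) = (SX)(g)(SY)(g)$ gives, for all $f$,
$$
(S\,\ex[X\diamond Y\mid\mathcal{G}])(f) = (S(X\diamond Y))(P_V f) = (SX)(P_V f)(SY)(P_V f) = (S\,\ex[X\mid\mathcal{G}])(f)\,(S\,\ex[Y\mid\mathcal{G}])(f),
$$
which is exactly $S(\ex[X\mid\mathcal{G}]\diamond\ex[Y\mid\mathcal{G}])(f)$, and injectivity of $S$ closes the argument. An alternative, perhaps more elementary, route is a density argument directly in $L^2$: approximate $X$ and $Y$ by finite linear combinations of Wick exponentials, use bilinearity of $\diamond$ and linearity of conditional expectation to reduce to the exponential case, and control the errors using the contractivity of conditional expectation on $L^2$ together with an $L^2$-continuity estimate for $\diamond$ such as the Mehler/Gamma bound \eqref{eq_WickProductGammaEstimate} (applied after noting $\Gamma(\sqrt{2})$ commutes with $\ex[\cdot\mid\mathcal{G}]$ when $\mathcal{G}$ is generated by Wiener integrals).

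The main obstacle I anticipate is the approximation/closure step rather than the exponential case, which is essentially a computation. Specifically, one must be careful that truncating a sum of Wick exponentials and conditioning does not destroy membership in $L^2$ of the Wick product of the approximants, and that the convergence $X_n \diamond Y_n \to X\diamond Y$ (needed to pass the limit through on the left) actually holds under the stated hypotheses — the Wick product is not jointly $L^2$-continuous without some control on higher chaos norms, which is precisely why the hypothesis $X\diamond Y \in L^2(\Omega)$ is imposed and why the Gamma-bound \eqref{eq_WickProductGammaEstimate} is the right tool. For this reason I would favor the $S$-transform route, where the only analytic input is the identity $(S\,\ex[Z\mid\mathcal{G}])(f) = (SZ)(P_V f)$ and the injectivity of $S$; both sides of the claimed equality then lie in $L^2(\Omega)$ by hypothesis, and matching $S$-transforms finishes the proof without any delicate limiting argument. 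Since the excerpt only needs this for $\mathcal{G} = \sigma(W_{1/n}, \ldots, W_1)$, one may if desired just take $V = \mathrm{span}\{\eins_{(0,k/n]} : k = 1,\ldots,n\}$ and keep everything completely explicit.
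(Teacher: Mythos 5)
Your argument is correct, and it is essentially a written-out version of what the paper leaves to the literature: the paper gives no proof of this proposition at all, citing \cite[Corollary 9.4]{Janson} and \cite[Lemma 6.20]{DiNunno}, where the statement is obtained from the fact that conditioning on a Gaussian sub-$\sigma$-field acts as the second quantization $\Gamma(P_V)$ of the orthogonal projection $P_V$, which commutes with Wick products. Your $S$-transform route is the same mechanism in disguise: the identity $(S\,\ex[Z\mid\mathcal{G}])(f) = (SZ)(P_V f)$ is exactly the statement $\ex[\cdot\mid\mathcal{G}] = \Gamma(P_V)$ read through the $S$-transform, and combining it with \eqref{eq_WickByS} plus injectivity of $S$ is clean and complete. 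Two remarks. First, your worry about the existence of $\ex[X\mid\mathcal{G}]\diamond\ex[Y\mid\mathcal{G}]$ in $L^2(\Omega)$ resolves itself in the $S$-transform formulation: since the paper \emph{defines} the Wick product by \eqref{eq_WickByS}, exhibiting the $L^2$ element $\ex[X\diamond Y\mid\mathcal{G}]$ whose $S$-transform factors as $(S\,\ex[X\mid\mathcal{G}])(f)\,(S\,\ex[Y\mid\mathcal{G}])(f)$ simultaneously proves existence of the right-hand side and the asserted equality, so the $S$-transform route genuinely avoids the delicate limiting argument you flag in the density approach. Second, your restriction to $\mathcal{G}$ generated by a closed subspace $V\subseteq L^2$ via Wiener integrals is not merely a convenience: the proposition is false for an arbitrary sub-$\sigma$-field (e.g.\ $X=Y=W_1$, $\mathcal{G}=\sigma(|W_1|)$ gives $\ex[X\mid\mathcal{G}]\diamond\ex[Y\mid\mathcal{G}]=0$ while $\ex[W_1^{\diamond 2}\mid\mathcal{G}]=W_1^2-1$), so the Gaussian structure of $\mathcal{G}$ is a genuine hypothesis, satisfied by the only case the paper uses, $\mathcal{G}=\sigma(W_{1/n},\ldots,W_1)$ with $V=\mathrm{span}\{\eins_{(0,k/n]}\}$.
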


This immediately implies
that the computations on optimal approximation can be extended to Wick-analytic functionals on the underlying Wiener integrals:

\begin{corollary}\label{cor_WickCarriesOverOpt}
For $F= F(I(f_1), I(f_2), \ldots, I(f_K)) \in \lin (\WA)$ and the optimal approximation \eqref{eq_OptimalAppproxDef}, we have
$$
\widehat{F}^n = F\left(\widehat{I(f_1)}^{n}, \widehat{I(f_2)}^{n}, \ldots, \widehat{I(f_K)}^{n}\right).
$$
\end{corollary}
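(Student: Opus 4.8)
The plan is to verify the asserted identity at the level of $S$-transforms and then conclude by injectivity of $(S\cdot)(\cdot)$ on $L^2(\Omega)$. Since $\ex[\,\cdot\mid\mathcal{G}_n]$ and $(S\cdot)(\cdot)$ are both linear and a generic element of $\lin(\WA)$ is a finite linear combination of $\WA$-functionals built from one common list $I(f_1),\ldots,I(f_K)$ (allowing some of the power series to be constant), it suffices to treat a single $F\in\WA$. Set $\mathcal{G}_n:=\sigma(W_{1/n},\ldots,W_1)$, let $V_n:=\mathrm{span}\{\eins_{(k/n,(k+1)/n]}:k=0,\ldots,n-1\}\subset L^2$, and let $P_n$ denote the orthogonal projection onto $V_n$; recall that $\widehat{I(f)}^n=I(P_nf)$, as also visible from \eqref{eq_InfAsConditional}. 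The first step is the identity
\begin{equation*}
(S\,\ex[X\mid\mathcal{G}_n])(g)=(SX)(P_ng),\qquad X\in L^2(\Omega),\ g\in L^2.
\end{equation*}
I would obtain it by splitting $g=P_ng+(g-P_ng)$ and factoring $\exp^{\diamond}(I(g))=\exp^{\diamond}(I(P_ng))\exp^{\diamond}(I(g-P_ng))$ (an ordinary product, valid since $P_ng\perp(g-P_ng)$): the second factor is independent of $\mathcal{G}_n$ with mean $1$ because $g-P_ng\perp V_n$, while $\exp^{\diamond}(I(P_ng))$ and $\ex[X\mid\mathcal{G}_n]$ are $\mathcal{G}_n$-measurable, so the tower property and independence give $\ex[\ex[X\mid\mathcal{G}_n]\exp^{\diamond}(I(g))]=\ex[X\exp^{\diamond}(I(P_ng))]$.

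The second step computes the $S$-transform of a Wick-analytic functional: for $F=\bigl(\sum_{k\ge0}a_{1,k}I(f_1)^{\diamond k}\bigr)\diamond\cdots\diamond\bigl(\sum_{k\ge0}a_{K,k}I(f_K)^{\diamond k}\bigr)$, the multiplicativity \eqref{eq_WickByS} of $S$ over $\diamond$ together with $(S\,I(f)^{\diamond k})(g)=\langle f,g\rangle^k$ (read off from \eqref{eq_SOfWickExp} by matching coefficients of the $L^2(\Omega)$-convergent Wick-exponential series, convergence coming from $\|I(f)^{\diamond k}\|_{L^2(\Omega)}^2=k!\|f\|^{2k}$ and the growth condition in Definition \ref{def_WA}) yield
\begin{equation*}
(SF)(g)=\prod_{i=1}^{K}G_i(\langle f_i,g\rangle),\qquad G_i(x):=\sum_{k\ge0}a_{i,k}x^k.
\end{equation*}
Combining the two steps and using self-adjointness of $P_n$ (so $\langle f_i,P_ng\rangle=\langle P_nf_i,g\rangle$),
\begin{equation*}
(S\,\widehat{F}^n)(g)=(SF)(P_ng)=\prod_{i=1}^{K}G_i(\langle P_nf_i,g\rangle)=\bigl(S\,F(I(P_nf_1),\ldots,I(P_nf_K))\bigr)(g),
\end{equation*}
the last equality being the previous display with each $f_i$ replaced by $P_nf_i\in L^2$, so that the right-hand side is a bona fide element of $\WA\subset L^2(\Omega)$ equal to $F(\widehat{I(f_1)}^n,\ldots,\widehat{I(f_K)}^n)$. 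Injectivity of $(S\cdot)(\cdot)$ then gives the claim for $F\in\WA$, and linearity extends it to $\lin(\WA)$.

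What I expect to be the main obstacle is not the algebra but the surrounding integrability and interchange-of-limit bookkeeping in the first step and in the term-by-term use of $S$: one must check $\exp^{\diamond}(I(g))\in L^p(\Omega)$ for all $p$ and that the products in question lie in $L^1(\Omega)$, so that the tower property and the factorization over the independent factor are legitimate, and one must check that the Wick-power series defining $F$ and $F(\widehat{I(f_1)}^n,\ldots,\widehat{I(f_K)}^n)$ converge in $L^2(\Omega)$; all of this is controlled by $\|I(f)^{\diamond k}\|_{L^2(\Omega)}^2=k!\|f\|^{2k}$, the contraction $\|P_nf_i\|\le\|f_i\|$, and the bound $\sup_{k\ge1}\sqrt[k]{k!|a_{i,k}|}<\infty$. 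A chaos-theoretic route avoiding the first step is also available: Proposition \ref{prop_WickConditionalExpectation} reduces matters to $K=1$, where $\ex[I(f)^{\diamond k}\mid\mathcal{G}_n]=(\widehat{I(f)}^n)^{\diamond k}$ follows from the Hermite generating function applied to the orthogonal decomposition $I(f)=I(P_nf)+I(f-P_nf)$ (the second summand Gaussian and $\mathcal{G}_n$-independent); summing against the $a_k$ and using $L^2$-continuity of conditional expectation gives $\ex[G(I(f))\mid\mathcal{G}_n]=G(\widehat{I(f)}^n)$, and Proposition \ref{prop_WickConditionalExpectation} together with the Wick--H\"older bound \eqref{eq_WickProductGammaEstimate} (to keep all intermediate Wick products in $L^2(\Omega)$) recombines the factors for general $K$.
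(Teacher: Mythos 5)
Your argument is correct, but your primary route is not the one the paper takes. The paper offers no explicit proof: the corollary is stated as an immediate consequence of Proposition \ref{prop_WickConditionalExpectation}, i.e.\ one conditions the Wick-power series term by term, using $\ex[X\diamond Y\mid\mathcal{G}]=\ex[X\mid\mathcal{G}]\diamond\ex[Y\mid\mathcal{G}]$ iteratively to get $\ex[I(f_i)^{\diamond k}\mid\mathcal{G}_n]=(\widehat{I(f_i)}^{n})^{\diamond k}$, and then passes the conditional expectation through the $L^2$-convergent sum — exactly the ``chaos-theoretic route'' you sketch in your last sentences (your Hermite-generating-function derivation of $\ex[I(f)^{\diamond k}\mid\mathcal{G}_n]=(\widehat{I(f)}^n)^{\diamond k}$ is a harmless variant of iterating the proposition with $X=Y=I(f)$). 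Your main argument instead goes through the $S$-transform: the identity $(S\,\ex[X\mid\mathcal{G}_n])(g)=(SX)(P_ng)$ — which you establish correctly via the orthogonal factorization $\exp^{\diamond}(I(g))=\exp^{\diamond}(I(P_ng))\exp^{\diamond}(I(g-P_ng))$ and independence — identifies conditioning on $\mathcal{G}_n$ as the second quantization of the projection $P_n$, and the rest is the computation $(SF)(g)=\prod_iG_i(\langle f_i,g\rangle)$ plus self-adjointness of $P_n$ and injectivity of $S$. What your route buys is a self-contained proof that does not invoke the cited Proposition \ref{prop_WickConditionalExpectation} at all, together with a reusable structural statement about conditional expectation; what it costs is the extra bookkeeping you yourself flag (integrability of the products, continuity of $S$ on $L^2(\Omega)$, and the identification $\widehat{I(f)}^n=I(P_nf)$), all of which you handle or correctly reduce to the standard bounds $\|I(f)^{\diamond k}\|_{L^2(\Omega)}^2=k!\|f\|^{2k}$ and $\|P_nf\|\le\|f\|$. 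I see no gap in either route.
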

 
However, we are interested in the implications for the MSE and the constants involved. This is done in three subsections: Finite chaos random variables, Infinite chaos random variables and the conclusion to the proof of our main result.

\subsection{Finite chaos}\label{subsection_FiniteChaos}
We clearly have 
$$
\widehat{W_t}^n = \ex[W_t|W_{1/n}, W_{2/n}, \ldots, W_1] = W_t^{\lin},
$$
where 
$$
W_t^{\lin} := W_{i/n} + n(t-i/n)(W_{(i+1)/n}-W_{i/n}), \quad t \in [i/n, (i+1)/n),
$$
is the linear interpolation of $W$ with respect to the equidistant time grid.
%We make use of the following notations for every $n \in \NN$, $k = 0, \ldots, n-1$, $g \in C([0,1])$,$g_k := g(k/n)$
%and introduce a simple strong approximation of Wiener integrals:\begin{equation}\label{eq_ApproxWienerIntegral}I^n(f) := \sum_{k=0}^{n-1} (f_k + f'_k\Delta/2) \Delta_k W. \end{equation}
All further computations will be based on the following convergences:

\begin{proposition}\label{proposition_WienerIntCov}
Suppose $f, 'g' \in BV$. Then 
$$
\lim_{n \rightarrow \infty} n^2 \; \ex\left[(I(f) - \widehat{I(f)}^n)(I(g) - \widehat{I(g)}^n)\right] = \frac{1}{12}\langle f', g' \rangle.
$$
\end{proposition}

\begin{proof}
Via integration by parts $I(f) = f(1) W_1 - \int_{0}^{1} f'(s)W_s \, ds$. Hence, Fubini's theorem yields
\begin{equation}\label{eq_WienerIntDiff}
I(f) - \widehat{I(f)}^n = - \int_{0}^{1} f'(s) \left(W_s - W^{\lin}_s\right) ds = -\sum_{i0}^{n-1} \int_{i/n}^{(i+1)/n} f'(s) \left(W_s - W^{\lin}_s\right) ds.
\end{equation}
We recall the well-known covariances of these Brownian bridges:
\begin{equation*}
\int_{i/n}^{(i+1)/n} \int_{j/n}^{(j+1)/n} \ex[(W_s - W^{\lin}_s)(W_t - W^{\lin}_t)] ds\, dt = \eins_{\{i=j\}}\frac{1}{12 n^3}. 
\end{equation*}
Hence, for $g,h \in C([0,1];\RR)$, the mean value theorem then gives
$$
\int_{i/n}^{(i+1)/n} \int_{j/n}^{(j+1)/n} g(s) h(t)\ex[(W_s - W^{\lin}_s)(W_t - W^{\lin}_t)] ds\, dt = g(\xi_i)h(\overline{\xi}_i)\eins_{\{i=j\}}\frac{1}{12 n^3}
$$
for appropriate $\xi_i, \overline{\xi}_i \in [i/n,(i+1)/n]$. Thus, \eqref{eq_WienerIntDiff}, gives the Riemann sum
\begin{align}\label{eq_RiemannSumFirst}
\ex\left[(I(f) - \widehat{I(f)}^n)(I(g) - \widehat{I(g)}^n)\right]
%&= \sum_{i,j=0}^{n-1} \int_{i/n}^{(i+1)/n} \int_{j/n}^{(j+1)/n} f'(s) g'(t)\ex[(W_s - W^{\lin}_s)(W_t - W^{\lin}_t)] ds\, dt\nonumber\\
=\frac{1}{12 n^2} \left(\frac{1}{n}\sum_{i = 0}^{n-1} f'(\xi_i) g'(\overline{\xi}_i) \right).
\end{align}
We denote $h_k := h(k/n)$ for $h \in C([0,1];\RR)$ and the total variation 
\begin{align*}
T(h) := \sup\left\{\sum_{i=0}^{m-1}|h(t_{i+1})-h(t_i)| : m \in \NN, 0=t_0<\ldots <t_{m}=1\right\}. 
\end{align*}
Hence, via the mean value theorem for some $n\int_{i/n}^{(i+1)/n} h(s) ds = h(s_i), s_i \in \left[i/n, (i+1)/n\right]$, it is
\begin{equation}\label{eq_RiemannSumRateBV}
\left|\int_{0}^{1} h(s) ds - \frac{1}{n}\sum_{i=0}^{n-1} h_i\right| \leq %\frac{1}{n}\sum_{i=0}^{n-1} \left|n\int_{i/n}^{(i+1)/n} h(s) ds - h_i\right| = 
\frac{1}{n}\sum_{i=0}^{n-1} \left|h(s_i) - h(i/n)\right| \leq\frac{T(h)}{n}.
\end{equation}
Moreover, for all $i=0,\ldots, n-1$, we notice
$$
\left|f'_i g'_i - f'(\xi_i)g'(\overline{\xi}_i)\right| \leq \left|f'_i - f'(\xi_i) \right|\|g'\|_{\infty} + \left|g'_i - g'(\overline{\xi}_i)\right|\|f'\|_{\infty},
$$
and therefore, via triangle inequality,
\begin{equation}\label{eq_ErrorAsRiemannSum}
\frac{1}{12}\left|\frac{1}{n}\sum_{i=0}^{n-1} f'_i g'_i - \left(\frac{1}{n}\sum_{i = 0}^{n-1} f'(\xi_i) g'(\overline{\xi_i}) \right)\right| \leq \frac{1}{12 n}\left(T(f')\|g'\|_{\infty} + T(g')\|f'\|_{\infty}\right). 
\end{equation}
Thanks to \eqref{eq_RiemannSumFirst} - \eqref{eq_ErrorAsRiemannSum},
we conclude
\begin{align}\label{eq_WienerIntOACovariance}
\lim_{n \rightarrow \infty} n^2\, \ex\left[(I(f) - \widehat{I(f)}^n)(I(g) - \widehat{I(g)}^n)\right] &= \frac{1}{12} \langle f', g' \rangle.
\end{align}
\end{proof}

\begin{remark}\label{remark_WienerInt}
In the simplest case, both converge towards $\frac{1}{12}\|f'\|^2$ as $n$ tends to infinity:
$$
n^2\, \ex\left[(I(f) - \widehat{I(f)}^n)(I(f) - I^n(f))\right], \quad n^2\, \ex\left[(I(f) - \widehat{I(f)}^n)I(f)\right].
$$

As a direct consequence of \eqref{eq_RiemannSumRateBV}, \eqref{eq_ErrorAsRiemannSum} and $T(f' g') \leq T(f')\|g'\|_{\infty} + T(g')\|f'\|_{\infty}$, we observe the error expansion
$$
\left|n^2\, \ex\left[(I(f) - \widehat{I(f)}^n)(I(g) - \widehat{I(g)}^n)\right] - \frac{1}{12}\langle f',g'\rangle \right| \leq \frac{1}{6n}\left(T(f')\|g'\|_{\infty}+T(g')\|f'\|_{\infty}\right),
 $$ 
 which can be extended to interesting error expansions for all further results.
\end{remark}

The multiple chaos extension of the covariance limits in Proposition \ref{proposition_WienerIntCov} is:
\begin{proposition}\label{proposition_OptWienerIntWick}
Suppose $f', g' \in BV$ and $k \in \NN$ is fixed. Then 
$$
\lim_{n \rightarrow \infty} n^2 \; \ex\left[(I(f)^{\diamond k} - (\widehat{I(f)}^n)^{\diamond k}) (I(g)^{\diamond k} - (\widehat{I(g)}^n)^{\diamond k})\right] = \frac{1}{12}\langle f', g' \rangle\, k\, k!\, \langle f, g \rangle^{(k-1)}.
$$
\end{proposition}

\begin{proof}
 For higher chaos terms we observe the standard expansion
\begin{equation}\label{eq_WickPowerExp}
I(f)^{\diamond k} - (\widehat{I(f)}^n)^{\diamond k} = (I(f) - \widehat{I(f)}^n) \diamond \sum_{j=1}^{k} (\widehat{I(f)}^n)^{\diamond j-1} \diamond I(f)^{\diamond k-j}. 
\end{equation}
We show that the right hand side is close enough to the simplified variable
$$
(I(f) - \widehat{I(f)}^n) \diamond k I(f)^{\diamond (k-1)}.
$$
Dealing with $L^2$-norms of Gaussian variables, we will frequently make use of Wick's Theorem,
\begin{equation}\label{eq_WickTheorem}
\ex\left[\left(I(f_1) \diamond \cdots \diamond I(f_n)\right) \left(I(g_1) \diamond \cdots \diamond I(g_m)\right)\right] = \eins_{\{n=m\}} \sum\limits_{\sigma \in \mathcal{S}_{n}} \prod\limits_{i=1}^{n} \langle f_i, g_{\sigma(i)}\rangle,
\end{equation}
for all $n,m \in \NN$, $f_1,\ldots, f_n, g_1,\ldots, g_m \in L^2$, where $\mathcal{S}_{n}$ denotes the group of permutations on $\{1, \ldots, n\}$ (see e.g. \cite[Theorem 3.9]{Janson}). 
Via \eqref{eq_WickPowerExp} and the reformulation for general products
\begin{align*}
\sum_{j=2}^{k}a^{k-j}\left(a^{j-1} - b^{j-1}\right) &= (a-b)\sum_{j=2}^{k}a^{k-j}\sum_{l=1}^{j-1} a^{l-1} b^{j-1-l} = (a-b)\sum_{j=2}^{k}\sum_{l=1}^{j-1} a^{k-1-l} b^{l-1}\\
&= (a-b)\sum_{l=1}^{k-1}(k-l) a^{k-1-l} b^{l-1}, 
\end{align*}
for the difference, we have
\begin{align}\label{eq_WickPowerDiffExpansion}
&(I(f) - \widehat{I(f)}^n) \diamond k I(f)^{\diamond (k-1)} - \left(I(f)^{\diamond k} - (\widehat{I(f)}^n)^{\diamond k}\right)\nonumber\\
&=  \left(I(f) - \widehat{I(f)}^n\right) \diamond \sum_{j=2}^{k} I(f)^{\diamond k-j} \diamond \left(I(f)^{\diamond (j-1)} -  (\widehat{I(f)}^n)^{\diamond (j-1)}\right)\nonumber\\
&=   \left(I(f) - \widehat{I(f)}^n\right)^{\diamond 2} \diamond \sum_{l=1}^{k-1}(k-l) I(f)^{\diamond (k-1-l)} \diamond (\widehat{I(f)}^n)^{\diamond (l-1)}.
\end{align}
We give a sufficient upper bound on the $L^2$-norm. By the covariances
\begin{align}
\ex\left[(I(f) - \widehat{I(f)}^n)(I(g) - \widehat{I(g)}^n)\right] &=\ex\left[(I(f) - \widehat{I(f)}^n)I(g)\right],
\ &\ex\left[(I(f) - \widehat{I(f)}^n)\widehat{I(g)}^n\right] &= 0,\nonumber\\
\ex\left[I(f)\widehat{I(f)}^n\right] &= \ex\left[I(f)^2\right]\geq \ex\left[(\widehat{I(f)}^n)^2\right]\label{eq_covariances}
\end{align}
and Wick's Theorem \eqref{eq_WickTheorem}, for all $0 \leq m,m' <k$, according to the scheme of numbers of factors
\[
\begin{array}{r c| c l}
2 \times \left\{ \right.&I(f) - \widehat{I(f)}^n &I(f) - \widehat{I(f)}^n & \left. \right\} \times 2\\
m\times \left\{ \right. & I(f) & I(f) & \left. \right\} \times m'\\
k-m\times \left\{ \right. & \widehat{I(f)}^n & \widehat{I(f)}^n &\left. \right\} \times k-m'
\end{array}
\]
and the shorthand notation 
$$
e_n := \ex\left[(I(f) - \widehat{I(f)}^n)^2\right],
$$
we obtain
\begin{align}\label{eq_HorribleWickProductNorms}
&
\ex\left[\left((I(f) - \widehat{I(f)}^n)^{\diamond 2} \diamond I(f)^{\diamond m} \diamond (\widehat{I(f)}^n)^{\diamond k-m}\right)\left((I(f) - \widehat{I(f)}^n)^{\diamond 2} \diamond I(f)^{\diamond m'} \diamond (\widehat{I(f)}^n)^{\diamond k-m'}\right)\right]\nonumber\\
%&\leq 2 \, e_n^2\, \ex\left[\left(I(f)^{\diamond k}\right)^2\right] + 4m m' \; e_n^3\, \ex\left[\left(I(f)^{\diamond k-1}\right)^2\right]\nonumber\\
%&\quad +m (m-1) m' (m'-1) \; e_n^4\, \ex\left[\left(I(f)^{\diamond k-2}\right)^2\right]\\
&\leq 2 \, e_n^2\, k!\|f\|^{2k} + 4m m' \; e_n^3\, (k-1)!\|f\|^{2(k-1)}+m (m-1) m' (m'-1) \; e_n^4\, (k-2)!\|f\|^{2(k-2)}.
\end{align}
Thus, via \eqref{eq_WickPowerDiffExpansion}--\eqref{eq_HorribleWickProductNorms} and $\sum_{l=1}^{k-1-m}\left((k-l)\cdots (k-l-m)\right) = \frac{1}{2+m}k(k-1)\cdots (k-1-m)$ for $k-1-m\geq 0$ (clear by induction),
%\begin{align*}\sum_{l=1}^{k-1}\left((k-l)\cdots (k-l-m)\right) = \frac{1}{2+m}k(k-1)\cdots (k-1-m),\end{align*}
we conclude
\begin{align}\label{eq_WickPowerDiffSimpler}
&\ex\left[\left(\left(I(f)^{\diamond k} - (\widehat{I(f)}^n)^{\diamond k}\right) -  (I(f) - \widehat{I(f)}^n) \diamond k I(f)^{\diamond (k-1)} \right)^2\right] \nonumber\\
%&= \ex\left[\left(\sum_{j=2}^{k} (I(f) - \widehat{I(f)}^n) \diamond I(f)^{\diamond (k-j)} \diamond \left(I(f)^{\diamond (j-1)} - (\widehat{I(f)}^n)^{\diamond (j-1)}\right)\right)^2\right] \nonumber\\
%&= \ex\left[\left(\sum_{j=2}^{k} \sum_{l=1}^{k-j-1} (I(f) - \widehat{I(f)}^n)^{\diamond 2} \diamond I(f)^{\diamond (k-1-l)} \diamond (\widehat{I(f)}^n)^{\diamond (l-1)}\right)^2\right] \nonumber\\
&= \ex\left[\left(\sum_{l=1}^{k-1}(k-l) \left(I(f) - \widehat{I(f)}^n\right)^{\diamond 2} \diamond I(f)^{\diamond (k-1-l)} \diamond (\widehat{I(f)}^n)^{\diamond (l-1)}\right)^2\right] \nonumber\\
&= \sum_{l,l'=1}^{k-1}(k-l)(k-l') \ex\left[\left((I(f) - \widehat{I(f)}^n)^{\diamond 2} \diamond I(f)^{\diamond (k-1-l)} \diamond (\widehat{I(f)}^n)^{\diamond l-1}\right)\right.\nonumber\\
&\hspace{4cm} \times \left.\left((I(f) - \widehat{I(f)}^n)^{\diamond 2} \diamond I(f)^{\diamond k-i-l'} \diamond (\widehat{I(f)}^n)^{\diamond l'-1}\right)\right] \nonumber\\
&\leq 2\left(\sum_{l=1}^{k-1}(k-l)\right)^2e_n^2 (k-2)!\|f\|^{2(k-2)} + 4\left(\sum_{l=1}^{k-2}(k-l)(k-l-1)\right)^2e_n^3 (k-3)!\|f\|^{2(k-3)}\nonumber\\
&\quad + \left(\sum_{l=1}^{k-3}(k-l)(k-l-1)(k-l-2)\right)^2e_n^4 (k-4)!\|f\|^{2(k-4)}\nonumber\\
&= \frac{1}{2}\left(k(k-1)\right)^2e_n^2 \ex\left[\left(I(f)^{\diamond (k-2)}\right)^2\right] + \frac{4}{9}\left(k(k-1)(k-2)\right)^2e_n^3 \ex\left[\left(I(f)^{\diamond (k-3)}\right)^2\right]\nonumber\\
& \quad + \frac{1}{16}\left(k(k-1)(k-2)(k-3)\right)^2e_n^4 \ex\left[\left(I(f)^{\diamond (k-4)}\right)^2\right].
\end{align}
In particular, via Proposition \ref{proposition_WienerIntCov}
\begin{equation}\label{eq_WickPowerDiffSimplerLandau}
\ex\left[\left((I(f)^{\diamond k} - (\widehat{I(f)}^n)^{\diamond k}) -  (I(f) - \widehat{I(f)}^n) \diamond k I(f)^{\diamond (k-1)} \right)^2\right]  \in  \mathcal{O}(n^{-4}).
\end{equation}
%Due to the Cauchy-Schwarz inequality and \eqref{eq_WickPowerDiffSimplerLandau},\begin{align*}&\ex\left[\left((I(f)^{\diamond k} - (\widehat{I(f)}^n)^{\diamond k}) -  (I(f) - \widehat{I(f)}^n) \diamond k I(f)^{\diamond (k-1)} \right)\right.\\&\qquad \left. \times \left((I(g)^{\diamond k} - (\widehat{I(g)}^n)^{\diamond k}) -  (I(g) - \widehat{I(g)}^n) \diamond k I(g)^{\diamond (k-1)} \right)\right]  \in  \mathcal{O}(n^{-4}),\end{align*}
Therefore, by $AB-ab = (A-a)B + a(B-b)$ and the Cauchy-Schwarz inequality
\begin{align}\label{eq_WickPowerDiffSimplerLandau2}
&\left|\ex\left[\left(I(f)^{\diamond k} - (\widehat{I(f)}^n)^{\diamond k}\right)\left(I(g)^{\diamond k} - (\widehat{I(g)}^n)^{\diamond k}\right)\right]\right.\nonumber\\
&\left. -\ex\left[\left((I(f) - \widehat{I(f)}^n) \diamond k I(f)^{\diamond (k-1)} \right)\left((I(g) - \widehat{I(g)}^n) \diamond k I(g)^{\diamond (k-1)} \right)\right]\right| \in  \mathcal{O}(n^{-4}).
\end{align}
Hence, due to \eqref{eq_WickPowerDiffSimplerLandau2}, Wick's Theorem \eqref{eq_WickTheorem}, the covariances \eqref{eq_covariances} and Proposition \ref{proposition_WienerIntCov}, we conclude
\begin{align}\label{eq_WickConvFiniteChaos}
&\lim_{n \rightarrow \infty} n^2 \; \ex\left[\left(I(f)^{\diamond k} - (\widehat{I(f)}^n)^{\diamond k}\right)\left(I(g)^{\diamond k} - (\widehat{I(g)}^n)^{\diamond k}\right)\right]\nonumber\\ 
&= \lim_{n \rightarrow \infty} n^2 \; \ex\left[\left((I(f) - \widehat{I(f)}^n) \diamond k I(f)^{\diamond (k-1)} \right)\left((I(g) - \widehat{I(g)}^n) \diamond k I(g)^{\diamond (k-1)} \right)\right]\nonumber\\
&= \lim_{n \rightarrow \infty} n^2 \; \ex\left[(I(f) - \widehat{I(f)}^n)(I(g) - \widehat{I(g)}^n)\right] k\, k!\, \langle f, g \rangle^{(k-1)}\nonumber\\
&\quad + \lim_{n \rightarrow \infty} n^2 \; \ex\left[(I(f) - \widehat{I(f)}^n)(I(g) - \widehat{I(g)}^n)\right]^2 k^4\, (k-2)! \,\langle f, g \rangle^{(k-2)}\nonumber\\
&= \frac{1}{12}\langle f', g' \rangle k\, k!\, \langle f, g \rangle^{(k-1)}.
\end{align}
\end{proof}

\subsection{Infinite chaos}\label{subsection_InfiniteChaos}

The paradigmatic result on optimal approximation is:
 
\begin{theorem}\label{thm_OptWickFunc}
Suppose $f', g' \in BV$, $F=F(I(f)), G=G(I(g)) \in \WA$. Then 
$$
\lim_{n \rightarrow \infty} n^2\, \ex\left[(F-\widehat{F}^n) (G-\widehat{G}^n)\right]= \frac{1}{12} \langle f', g'\rangle \, \ex\left[F'(I(f)) G'(I(g))\right].
$$
\end{theorem}

\begin{proof}
We firstly present the proof for $F=G$. The Wiener chaos expansion yields $F(I(f)) = \sum_{k\geq 0} \frac{a_k}{k!} I(f)^{\diamond k}$ for some unique coefficients $a_k \in \RR$.
Due to Proposition \ref{prop_DerivativesWA}, $\dfrac{\partial^k}{\partial x^k}F(I(f)) \in L^2(\Omega)$ for all $k \in \NN$.
Thanks to the derivative rule for Hermite polynomials $\dfrac{\partial}{\partial x} I(f)^{\diamond k} = k I(f)^{\diamond k-1}$, we observe for all $j \in \NN$,
\begin{align*}
\ex\left[\left(\dfrac{\partial^j}{\partial x^j}F(I(f))\right)^2\right] 
&= \sum_{k\geq j} \frac{a_k^2}{(k-j)!^2} \ex\left[\left(I(f)^{\diamond (k-j)}\right)^2\right].
\end{align*}
Hence, via \eqref{eq_WickPowerDiffSimpler} and the shorthand notation $e_n = \ex\left[(I(f) - \widehat{I(f)}^n)^2\right]$, we conclude
\begin{align}\label{eq_WickPowerFuncDiffSimpler}
&\sum_{k\geq 1} \frac{a_k^2}{k!^2}\ex\left[\left((I(f)^{\diamond k} - (\widehat{I(f)}^n)^{\diamond k}) -  (I(f) - \widehat{I(f)}^n) \diamond k I(f)^{\diamond (k-1)} \right)^2\right] \nonumber\\
&\leq \frac{1}{2}e_n^2 \sum_{k\geq 2} \frac{a_k^2}{(k-2)!^2} \ex\left[\left(I(f)^{\diamond (k-2)}\right)^2\right] + \frac{4}{9}e_n^3 \sum_{k\geq 3} \frac{a_k^2}{(k-3)!^2} \ex\left[\left(I(f)^{\diamond (k-3)}\right)^2\right]\nonumber\\
&\quad + \frac{1}{16}e_n^4 \sum_{k\geq 4} \frac{a_k^2}{(k-4)!^2} \ex\left[\left(I(f)^{\diamond (k-4)}\right)^2\right]\nonumber\\
&= \frac{e_n^2}{2}\ex\left[\left(F''\right)^2\right] + \frac{4e_n^3}{9}\ex\left[\left(\dfrac{\partial^3}{\partial x^j}F\right)^2\right] + \frac{e_n^4}{16}\ex\left[\left(\dfrac{\partial^4}{\partial x^j}F\right)^2\right]\in \mathcal{O}(n^{-4}).
\end{align}
Thus, via Corollary \ref{cor_WickCarriesOverOpt}, \eqref{eq_WickPowerFuncDiffSimpler} and Proposition \ref{proposition_OptWienerIntWick} (cf. \eqref{eq_WickConvFiniteChaos}), we obtain
\begin{align*}
\lim_{n \rightarrow \infty} n^2 \; \ex\left[\left(F - \widehat{F}^{n}\right)^2\right] &= \lim_{n \rightarrow \infty} n^2 \; \sum_{k\geq 1} \frac{a_k^2}{k!^2}\ex\left[\left(I(f)^{\diamond k} - (\widehat{I(f)}^n)^{\diamond k}\right)^2\right] \nonumber\\
&=\lim_{n \rightarrow \infty} n^2 \; \sum_{k\geq 1} \frac{a_k^2}{k!^2}\ex\left[\left((I(f) - \widehat{I(f)}^n) \diamond k I(f)^{\diamond (k-1)} \right)^2\right] \nonumber\\
&= \frac{1}{12} \|f'\|^2\sum_{k\geq 1} \frac{a_k^2}{(k-1)!}  \|f\|^{2(k-1)} = \frac{1}{12} \|f'\|^2 \ex\left[\left(F'(I(f))\right)^2\right].
\end{align*}
The proof for $F\neq G$ with the Wiener chaos expansion $G(I(g)) = \sum_{k\geq 0} \frac{b_k}{k!} I(g)^{\diamond k}$ proceeds analogously, as via \eqref{eq_WickPowerDiffSimplerLandau2}, \eqref{eq_WickPowerFuncDiffSimpler} and \eqref{eq_WickConvFiniteChaos},
\begin{align*}
 &\lim_{n \rightarrow \infty} n^2 \; \ex\left[\left(F - \widehat{F}^{n}\right)\left(G - \widehat{G}^{n}\right)\right]\\
 &= \lim_{n \rightarrow \infty} n^2 \; \sum_{k\geq 1} \frac{a_k b_k}{k!^2}\ex\left[\left(I(f)^{\diamond k} - (\widehat{I(f)}^n)^{\diamond k}\right)\left(I(g)^{\diamond k} - (\widehat{I(g)}^n)^{\diamond k}\right)\right] \nonumber\\
&=\lim_{n \rightarrow \infty} n^2 \; \sum_{k\geq 1} \frac{a_k b_k}{k!^2}\ex\left[\left((I(f) - \widehat{I(f)}^n) \diamond k I(f)^{\diamond (k-1)} \right)\left((I(g) - \widehat{I(g)}^n) \diamond k I(g)^{\diamond (k-1)} \right)\right] \nonumber\\
&= \frac{1}{12} \langle f', g'\rangle \sum_{k\geq 1} \frac{a_k b_k}{(k-1)!}  \langle f,g\rangle^{k-1} = \frac{1}{12} \langle f', g'\rangle \ex\left[F'(I(f)) G'(I(g))\right].
\end{align*}
\end{proof}

 \begin{example}
Suppose $f' \in BV$. Then we have
$$
\lim_{n \rightarrow \infty} n^2 \; \ex\left[\left(e^{I(f)}-\widehat{e^{I(f)}}^n\right)^2\right] = \frac{1}{12} \|f'\|^2 e^{2\|f\|^2}.
$$  
 \end{example}

\begin{remark}\label{rem_OptErrorExpansion}
Looking at \eqref{eq_HorribleWickProductNorms}, we observe that $\ex[(\widehat{I(f)}^n)^2] \leq \ex[(I(f))^2]$ is the only reason for the inequality. Hence, asymptotically, we obtain equalities in \eqref{eq_HorribleWickProductNorms}, \eqref{eq_WickPowerDiffSimpler} and \eqref{eq_WickPowerFuncDiffSimpler}. This yields the following expansion of the optimal approximation error with $e_n := \ex\left[(I(f) - \widehat{I(f)}^n)^2\right]$,
\begin{align*}
\ex\left[\left(F(I(f)) - \widehat{F(I(f))}^{n}\right)^2\right] &\sim e_n\, \ex\left[\left(F'(I(f))\right)^2\right]  + \frac{e_n^2}{2}\, \ex\left[\left(F''(I(f))\right)^2\right]\nonumber\\
&\quad + \frac{4e_n^3}{9}\, \ex\left[\left(F^{(3)}(I(f))\right)^2\right] + \frac{e_n^4}{16}\, \ex\left[\left(F^{(4)}(I(f))\right)^2\right].
\end{align*}
\end{remark}

Theorem \ref{thm_OptWickFunc} can be extended in various directions to multivariate functionals. For the proofs of our main result we need:

\begin{theorem}\label{thm_OptWickFuncMult}
Suppose 
$f', g', f'_1, g'_1,\ldots,\in BV$
%$f',f'_k,g',g'_k \in BV$, $k=1,\ldots, m$
and define the abbreviations
\begin{align*}
F &= F(I(f_1), \ldots, I(f_m)), \quad G= G(I(g_1), \ldots, I(g_m)), \quad %\widetilde{F}^{(n)} := \sum_{k=0}^{n} \pi_k(F(I^n(f_1), \ldots, I^n(f_m))), \quad 
F_{x_k} := \dfrac{\partial}{\partial x_k} F.
\end{align*}
Then 
$$
\lim_{n \rightarrow \infty} n^2\, \ex\left[(F-\widehat{F}^n) (G-\widehat{G}^n)\right]= \frac{1}{12} \sum_{i,j=1}^{m}\langle f'_i, g'_j\rangle\, \ex\left[F_{x_i}G_{x_j}\right].
$$

%(ii) \ Suppose $F = F(I(f)), G= G(I(g)) \in \WA$. Then we have\begin{align*}&(a) &\lim_{n \rightarrow \infty} n^2\, \ex\left[\left((F-\widetilde{F}^{(n)}) \diamond G\right)^2\right] &= \frac{1}{12} \|f'\|^2 \ex\left[\left(F' \diamond G\right)^2\right],\\&(b) &\lim_{n \rightarrow \infty} n^2\, \ex\left[\left(\widetilde{F}^{(n)} \diamond (I(g) - I^n(g)) \diamond G\right)^2\right] &= \frac{1}{12} \|g'\|^2 \ex\left[\left(F \diamond G\right)^2\right],\\&(c) &\lim_{n \rightarrow \infty} n^2\, \ex\left[\left((F-\widetilde{F}^{(n)}) \diamond G\right)\left(\widetilde{F}^{(n)} \diamond (I(g) - I^n(g)) \diamond G\right)\right] &= \frac{1}{12} \langle f', g'\rangle \ex\left[\left(F' \diamond G\right)\left(F \diamond G\right)\right].\end{align*}
\end{theorem} 

%\begin{remark}(i) \ The limit in Theorem \ref{thm_OptWickFuncMult} (i) for $\widehat{F}^n = \widehat{G}^n$ is the key result for the optimal approximation in Theorem \ref{thm_OptSDELin} and Theorem \ref{thm_OptSDELinMulti}. (ii) \ The limits for mixed terms in Theorem \ref{thm_OptWickFuncMult} (i) illustrate the compatibility of the convergences via optimal approximation and Algorithm \ref{algo_WickWP1} on the class $\lin(\WA)$.\end{remark}

\begin{proof}
Due to various Wiener chaos expansions the notations for arbitrary functionals become easily elaborately. However, the proof is a straightforward extension of the arguments for Theorem \ref{thm_OptWickFunc}. We present the proof for the two-dimensional case for $G=F=F(I(f), I(g)) \in \WA$. All other cases are straightforward generalizations as before. Let the Wiener chaos expansion
$$
F(I(f), I(g)) = \sum_{k,l=0}^{\infty} a_{k,l} I(f)^{\diamond k} \diamond I(g)^{\diamond l} 
$$
for some coefficients $a_{k,l} \in \RR$. Via Corollary \ref{cor_WickCarriesOverOpt}, it is
\begin{align}\label{eq_OAMulti1}
&\ex\left[\left(F(I(f), I(g)) - \widehat{F(I(f), I(g))}^{n}\right)^2\right]\nonumber\\
&= \sum_{\substack{k,l, k', l' \geq 0\\ k+l=k'+l'>0}} a_{k,l} a_{k',l'} \ex\left[\left(I(f)^{\diamond k} \diamond I(g)^{\diamond l} - (\widehat{I(f)}^{n})^{\diamond k} \diamond (\widehat{I(g)}^{n})^{\diamond l} \right)\right.\nonumber\\
&\hspace{4cm} \left. \times\left(I(f)^{\diamond k'} \diamond I(g)^{\diamond l'} - (\widehat{I(f)}^{n})^{\diamond k'} \diamond (\widehat{I(g)}^{n})^{\diamond l'} \right)\right].
\end{align}
As 
$$
I(f)^{\diamond k} \diamond I(g)^{\diamond l} - (\widehat{I(f)}^{n})^{\diamond k} \diamond (\widehat{I(g)}^{n})^{\diamond l} = (I(f)^{\diamond k} -  (\widehat{I(f)}^{n})^{\diamond k}) \diamond I(g)^{\diamond l} +  (\widehat{I(f)}^{n})^{\diamond k} \diamond (I(g)^{\diamond l} -  (\widehat{I(g)}^{n})^{\diamond l}),
$$ 
the right hand side in \eqref{eq_OAMulti1} is reduced to covariances of the terms of the type
\begin{equation*}
(I(f)^{\diamond k} -  (\widehat{I(f)}^{n})^{\diamond k}) \diamond I(g)^{\diamond l}, \quad (I(f)^{\diamond k} -  (\widehat{I(f)}^{n})^{\diamond k}) \diamond (\widehat{I(g)}^{n})^{\diamond l}.
\end{equation*}
Analogously to the proof of Proposition \ref{proposition_OptWienerIntWick} (see e.g. \eqref{eq_WickPowerDiffSimplerLandau}--\eqref{eq_WickPowerDiffSimplerLandau2}), these terms behave in covariance computations like
$$
\left(I(f) - \widehat{I(f)}^{n}\right) \diamond k I(f)^{\diamond k} \diamond I(g)^{\diamond l}.
$$
Suppose $f', g', \bar{f}', \bar{g}' \in BV$. We recall that $I(f)^{\diamond k} \diamond I(g)^{\diamond l}$ are polynomials $p(x_1,x_2)$ (of $(x_1,x_2)=I(f), I(g)$) and therefore differentiable. Then, analogously to \eqref{eq_WickPowerDiffSimpler} and due to the derivative rule for Hermite polynomials, for $k+l=k'+l'$ we obtain
\begin{align*}
&\ex\left[\left((I(f)^{\diamond k} -  (\widehat{I(f)}^{n})^{\diamond k}) \diamond I(g)^{\diamond l}\right)\left((I(\bar{f})^{\diamond k'} -  (\widehat{I(\bar{f})}^{n})^{\diamond k'}) \diamond I(\bar{g})^{\diamond l'}\right)\right]\\
&\sim \ex\left[\left((I(f) -  \widehat{I(f)}^n) \diamond k I(f)^{\diamond k-1} \diamond I(g)^{\diamond l}\right) \left( ((I(\bar{f}) -  \widehat{I(\bar{f})}^n) \diamond k'I(\bar{f})^{\diamond k'-1} \diamond I(\bar{g})^{\diamond l'}\right)\right]\\
&\sim \frac{1}{12n^2}\langle f', \bar{f}'\rangle \, \ex\left[\left(k I(f)^{\diamond k-1} \diamond I(g)^{\diamond l}\right) \left( (k'I(\bar{f})^{\diamond k'-1} \diamond I(\bar{g})^{\diamond l'}\right)\right]\\
&= \frac{1}{12n^2}\langle f', \bar{f}'\rangle \, \ex\left[\left(\dfrac{\partial}{\partial x_1} I(f)^{\diamond k} \diamond I(g)^{\diamond l}\right) \left(\dfrac{\partial}{\partial x_1} I(\bar{f})^{\diamond k'} \diamond I(\bar{g})^{\diamond l'}\right)\right].
\end{align*}
Thus, as in Proposition \ref{proposition_OptWienerIntWick}, for a fixed chaos (of order $N \in \NN$), we conclude
\begin{align*}
&\sum_{\substack{k,l, k', l' \geq 0,\ k+l=k'+l' = N}} a_{k,l} a_{k',l'} \ex\left[\left(I(f)^{\diamond k} \diamond I(g)^{\diamond l} - (\widehat{I(f)}^{n})^{\diamond k} \diamond (\widehat{I(g)}^{n})^{\diamond l} \right)\right.\\
&\hspace{5cm} \left. \times\left(I(f)^{\diamond k'} \diamond I(g)^{\diamond l'} - (\widehat{I(f)}^{n})^{\diamond k'} \diamond (\widehat{I(g)}^{n})^{\diamond l'} \right)\right]\\
&\sim \frac{1}{12n^2} \|f'\|^2 \sum
%_{\substack{k,l, k', l' \geq 0\\ k+l=k'+l' = N}} 
a_{k,l} a_{k',l'} \ex\left[\left(\dfrac{\partial}{\partial x_1} I(f)^{\diamond k} \diamond I(g)^{\diamond l}\right)\left(\dfrac{\partial}{\partial x_1} I(f)^{\diamond k'} \diamond I(g)^{\diamond l'}\right)\right]\\
&\quad + \frac{1}{12n^2} \|g'\|^2 \sum
%_{\substack{k,l, k', l' \geq 0\\ k+l=k'+l' = N}} 
a_{k,l} a_{k',l'} \ex\left[\left(\dfrac{\partial}{\partial x_2} I(f)^{\diamond k} \diamond I(g)^{\diamond l}\right)\left(\dfrac{\partial}{\partial x_2} I(f)^{\diamond k'} \diamond I(g)^{\diamond l'}\right)\right]\\
&\quad + \frac{2}{12n^2} \langle f', g\rangle\sum
%_{\substack{k,l, k', l' \geq 0\\ k+l=k'+l' = N}} 
a_{k,l} a_{k',l'} \ex\left[\left(\dfrac{\partial}{\partial x_1} I(f)^{\diamond k} \diamond I(g)^{\diamond l}\right)\left(\dfrac{\partial}{\partial x_2} I(f)^{\diamond k'} \diamond I(g)^{\diamond l'}\right)\right].
\end{align*}
Hence, analogously to Theorem \ref{thm_OptWickFunc}, summing up (Fubini's theorem applies as we have uniform bounds via Proposition \ref{prop_DerivativesWA}) leads to the asserted asymptotics
\begin{align}\label{eq_OAMultiLast}
&\ex\left[\left(F(I(f), I(g)) - \widehat{F(I(f), I(g))}^{n}\right)\right]\nonumber\\
&\sim \frac{1}{12n^2} \|f'\|^2\, \ex\left[F_{x_1}^2\right] + \frac{1}{12n^2} \|g'\|^2\, \ex\left[F_{x_2}^2\right] + \frac{2}{12n^2} \langle f', g' \rangle\, \ex\left[F_{x_1}F_{x_2}\right].
\end{align}
\end{proof}

\begin{remark}\label{remark_InfChaosGeneral}
In particular, due to 
$$
\ex\left[\left(F+G - \widehat{F+G}^n\right)^2\right] = \ex\left[(F-\widehat{F}^n)^2 + (G-\widehat{G}^n)^2 + 2(F-\widehat{F}^n)(G-\widehat{G}^n)\right], 
$$
or polarization, we conclude Theorem \ref{thm_OptSDELinMulti}  for $F, G \in \lin(\WA)$ as well. 
\end{remark}

\subsection{Proof of the main result} 

Our optimal approximation result is now an easy application:

\begin{proof}[Proof of Theorem \ref{thm_OptSDELin}]
Due to \eqref{eq_LinSkorohodSDESol}, Proposition \ref{prop_WickConditionalExpectation} and the deterministic term $\int_{0}^{1}a(s) ds$, we have
\begin{align*}
\ex[(X_1 - \widehat{X_1}^n)^2] &= \ex\left[\left(F(I(f)) \diamond e^{\diamond I(\sigma)} e^{\int_{0}^{1}a(s) ds} - \widehat{F(I(f))}^n \diamond e^{\diamond \widehat{I(\sigma)}^n} e^{\int_{0}^{1}a(s) ds}\right)^2\right]\\
&= e^{2\int_{0}^{1}a(s) ds}\, \ex\left[\left(F(I(f)) \diamond e^{\diamond I(\sigma)} - \widehat{F(I(f))}^n \diamond e^{\diamond \widehat{I(\sigma)}^n} \right)^2\right].
\end{align*}
For the random variable 
$$
G(I(f), I(\sigma)) := F(I(f)) \diamond e^{\diamond I(\sigma)},
$$

by $F(I(f)) \in \WA$ and Proposition \ref{prop_DerivativesWA}, these derivatives exist in $L^2(\Omega)$ (for the function $G(x_1,x_2)$):
$$
G_{x_1} = \dfrac{\partial}{\partial x_1} G(I(f), I(\sigma)) =  F'(I(f)) \diamond e^{\diamond I(\sigma)}, \qquad G_{x_2} = \dfrac{\partial}{\partial x_2} G(I(f), I(\sigma))= F(I(f)) \diamond e^{\diamond I(\sigma)}.
$$
Thanks to Theorem \ref{thm_OptWickFuncMult}, we therefore conclude
\begin{align*}
&\lim_{n \rightarrow \infty} n^2\, \ex\left[\left(F(I(f)) \diamond e^{\diamond I(\sigma)} - \widehat{F(I(f))} \diamond e^{\diamond \widehat{I(\sigma)}} \right)^2\right] = \frac{1}{12} \left(\|f'\|^2 \ex[(F'(I(f)) \diamond e^{\diamond I(\sigma)})^2]\right.\\
&\qquad \left. + \|\sigma'\|^2 \ex[(F(I(f)) \diamond e^{\diamond I(\sigma)})^2] +2\langle f',\sigma'\rangle \ex[(F'(I(f)) \diamond e^{\diamond I(\sigma)})(F(I(f)) \diamond e^{\diamond I(\sigma)})]\right)\\
&\qquad = \int_{0}^{1} \ex\left[\left(\left(f'(s) F'(I(f)) + \sigma'(s) F(I(f))\right)\diamond e^{\diamond I(\sigma)}\right)^2\right] ds.
\end{align*}
This yields the asserted optimal convergence in Theorem \ref{thm_OptSDELin}. 
\end{proof}

We obtain easily further multivariate generalizations of the nonadapted initial value:

\begin{theorem}\label{thm_OptSDELinMulti}
Suppose $f'_1, \ldots, f'_m, \sigma' \in BV$, $a: \RR \rightarrow \RR$ is integrable and 
$$
X_0 := F = F(I(f_1), \ldots, I(f_m))\in \lin (\WA).
$$
Then for the solution of the Skorohod SDE \eqref{eq_SDE} we have
$$
\lim_{n \rightarrow \infty} n^2 \, \ex[(X_1-\widehat{X_1}^n)^2] = \frac{ e^{2\int_{0}^{1} a(s) ds}}{12}\left(\int_{0}^{1} \ex\left[\left(\left( \sum_{k=1}^{m}f'_k(s)F_{x_k} + \sigma'(s) F\right)\diamond e^{\diamond I(\sigma)}\right)^2\right] ds\right).
$$
\end{theorem}

\begin{proof}
Concerning a multivariate initial value $F(I(f_1), \ldots, I(f_m))$ in Theorem \ref{thm_OptSDELinMulti}, we conclude analogously via Theorem \ref{thm_OptWickFuncMult}, Remark \ref{remark_InfChaosGeneral} and the function 
$$
G\left(I(f_1), \ldots, I(f_m), I(\sigma)\right) := F(I(f_1), \ldots, I(f_m)) \diamond e^{\diamond I(\sigma)} \in \lin(\WA).
$$
\end{proof}

\begin{remark}
The investigation of an asymptotically optimal approximation scheme for the linear Skorohod SDEs in Theorems \ref{thm_OptSDELin}, \ref{thm_OptSDELinMulti}  will follow in a subsequent work. 
\end{remark}

\begin{acknowledgement}
The author thanks Andreas Neuenkirch for many fruitful discussions.  
\end{acknowledgement}

\end{document}